\newtheorem{thm}{Theorem}[section]
\newtheorem{prop}[thm]{Proposition}
\theoremstyle{definition}
\newtheorem{rem}[thm]{Remark}
\newcommand{\R}{\mathbb{R}}
\newcommand{\C}{\mathbb{C}}
\newcommand{\N}{\mathbb{N}}
\newcommand{\mean}[3]{\mathbb{E}^{#1}_{#2}\left[ #3 \right]}
\newcommand{\sle}{\mathrm{SLE}}
\newcommand{\skle}{\mathrm{SKLE}}
\newcommand{\bmd}{\mathrm{BMD}}
\newcommand{\lm}{\mathrm{LM}}
\newcommand{\disk}{\mathbb{D}}
\newcommand{\uhp}{\mathbb{H}}
\newcommand{\Slit}{\mathsf{Slit}}
\newcommand{\slit}{\mathbf{s}}
\DeclareMathOperator{\dist}{dist}
\DeclareMathOperator{\hcap}{hcap}
\title[LM in terms of SKLE in chordal case]%
{Reformulation of Laplacian-$b$~motion \\
in terms of \\
stochastic~Komatu--Loewner~evolution \\
in the chordal~case}
\author[T.\ Murayama]{Takuya Murayama}
\address{Department of Mathematics, Graduate School of Science, Kyoto University, Kyoto 606-8502, Japan.
(Research Fellow of Japan Society for the Promotion of Science)}
\email{murayama@math.kyoto-u.ac.jp}
\keywords{SLE, stochastic Komatu--Loewner evolution, Laplacian-$b$ motion,
explosion, $\mathbb{H}$-excursion}
\subjclass[2010]{Primary 60J67, Secondary 60J70, 60H10}
\begin{document}

\maketitle

\begin{abstract}

We investigate the relationship between the Laplacian-$b$ motion and
stochastic Komatu--Loewner evolution (SKLE)
on multiply connected subdomains of the upper half-plane,
both of which are analogues to SLE.
In particular, we show that, if the driving function of an SKLE is given
by a certain stochastic differential equation,
then this SKLE is the same as a time-changed Laplacian-$b$ motion.
As application, we prove the finite time explosion of SKLE corresponding to
Laplacian-$0$ motion, or $\mathrm{SLE_6}$, in the sense that
the solution to the Komatu--Loewner equation for the slits blows up.
\end{abstract}

\section{Introduction}
\label{sec:intro}

Ever since Schramm~\cite{Sc00} introduced
the \emph{stochastic Loewner evolution with parameter $\kappa>0$}
(abbreviated as $\sle_{\kappa}$),
numerous studies have been conducted for identifying the scaling limits
of several two-dimensional lattice models in statistical physics,
such as loop-erased random walk, percolation exploration process and
critical Ising interface.
Most of these results are established on simply connected planar domains
such as the upper half-plane $\uhp=\{z \in \C; \Im z>0\}$,
since the definition of $\sle_{\kappa}$ depends on the theory of conformal maps
on simply connected domains, especially on the \emph{chordal Loewner equation}.
It is thus a non-trivial problem to extend $\sle_{\kappa}$
to multiply connected domains, and the way of extension is not unique.
For example, Lawler~\cite{La06} introduced the \emph{Laplacian-$b$ motion}
$\lm_b$ with $b=(6-\kappa)/(2\kappa)$
\footnote{Zhan~\cite{Zh04} considered a similar object independently,
where he called it the harmonic random Loewner chain and
stood in a viewpoint different from that of Lawler.}
as a candidate of the scaling limit of \emph{Laplacian-$b$ random walk}.
Its definition is based on the chordal Loewner equation and
the Maruyama--Girsanov transform
of the driving process associated with $\sle_{\kappa}$.
On the other hand, Chen and Fukushima~\cite{CF18} introduced
the \emph{stochastic Komatu--Loewner evolution} $\skle_{\alpha, \beta}$
as a process satisfying the \emph{domain Markov property} and
invariance under linear conformal maps,
both of which are typical properties of $\sle_{\kappa}$.
(Here, we use $\beta$ instead of $b$ in \cite[Eq.~(3.32)]{CF18}
to avoid a conflict with the exponent $b$ above.)
Its definition is motivated by Bauer and Friedrich~\cite{BF04, BF06, BF08} and
based on the \emph{chordal Komatu--Loewner equations}.

In this paper, we investigate the relationship between the two processes above
by means of Chen, Fukushima and Suzuki~\cite{CFS17} and the author~\cite{Mu18}.
After the definitions of $\skle_{\alpha, \beta}$ and $\lm_b$ are reviewed
in Sections~\ref{subsec:SKLE} and \ref{subsec:LM}, respectively,
we show in Section~\ref{subsec:reconst}
that $\lm_b$ can be regarded as $\skle_{\alpha, \beta}$
with appropriate $\alpha$ and $\beta$ modulo time-change.
In particular, the relation between $\skle_{\alpha, \beta}$ and $\lm_b$
becomes rather simple in the case $b=0$ ($\kappa=6$).
By using this identification, we prove in Section~\ref{subsec:expl}
that the finite time explosion of the solution
to the Komatu--Loewner equation for the slits~\cite[Eq.~(3.32) and (3.33)]{CF18}
corresponding to $\lm_0$.
This result provides a non-trivial example that satisfies the assumption
of \cite[Theorem~3.2]{Mu19+a}.
We note that, although each of the results in Section~\ref{sec:main} is simple,
they together illustrate a possible way to apply SKLE
to the theory of SLE on multiply connected domains.

\section{Preliminaries}
\label{sec:prel}

\subsection{Stochastic Komatu--Loewner evolution}
\label{subsec:SKLE}

Let us recall the notation in \cite{CF18, Mu18}. Let $N$ be a positive integer.
\begin{itemize}
\item $\Slit$ is the set of all elements
\begin{align*}
\slit&=(\slit_l)_{l=1}^{3N}
=(y_1, \ldots, y_N, x_1, \ldots, x_N, x^r_1, \ldots, x^r_N) \\
&\in (0,\infty)^N \times \R^{2N}
\end{align*}
with $x_j<x^r_j$ for each $j=1, \ldots, N$ such that
either $x^r_j<x_k$ or $x^r_k<x_j$ holds
if $y_j=y_k$ for two distinct numbers $j, k \in \{1, \ldots, N\}$.
\item $C_j(\slit)$ is the segment whose endpoints are $z_j=x_j+iy_j$
and $z^r_j=x^r_j+iy_j$ for $\slit \in \Slit$ and $1 \leq j \leq N$.
\item $D(\slit)$ is the \emph{standard slit domain}
$\uhp \setminus \bigcup_{j=1}^N C_j(\slit)$ for $\slit \in \Slit$.
\item $b_l \colon \R \times \Slit \to \R$, $1 \leq l \leq 3N$,
are the functions defined by
\[
b_l(\xi_0, \slit)=\begin{cases}
-2\pi \Im\Psi_{D(\slit)}(z_l, \xi_0) &(1 \leq l \leq N) \\
-2\pi \Re\Psi_{D(\slit)}(z_{l-N}, \xi_0) &(N+1 \leq l \leq 2N) \\
-2\pi \Re\Psi_{D(\slit)}(z^r_{l-2N}, \xi_0) &(2N+1 \leq l \leq 3N),
\end{cases}
\]
where $\Psi_D(\slit)$ is the \emph{complex Poisson kernel} of
\emph{Brownian motion with darning (BMD)} for $D(\slit)$~\cite[Lemma~4.1]{CFR16}.
\end{itemize}
We also recall that a set $F \subset \uhp$ is called a (compact $\uhp$-)\emph{hull}
if $F$ is bounded and relatively closed in $\uhp$
and if $\uhp \setminus F$ is still simply connected.
For any hull $F$ in a standard slit domain $D$,
there exists a unique conformal map $f_F$ from $D \setminus F$
onto another standard slit domain $\tilde{D}$,
which is called the \emph{canonical map},
such that the \emph{hydrodynamic normalization at infinity}
$f_F(z)=z+a/z+o(z^{-1})$ as $z \to \infty$ holds.
The positive constant $\hcap^D(F):=a$
is called the \emph{(BMD) half-plane capacity relative to $D$}.
See \cite[Proposition~2.3]{Mu18}.

Now, fix $t_0 \in (0, \infty]$.
Suppose that $a_t$ is a strictly increasing differentiable function
of $t \in [0, t_0)$ with $a_0=0$
and that $\xi(t)$ be an $\R$-valued continuous function on the same interval.
We consider the following ordinary differential equations:
\begin{align}
\frac{d}{dt}\slit_l(t)&=\frac{\dot{a}_t}{2}b_l(\xi(t), \slit(t)),& &1 \leq l \leq 3N,
\label{eq:KLs} \\
\frac{d}{dt}g_{t}(z)&=-\pi \dot{a}_t \Psi_{D(\slit(t))}(g_{t}(z),\xi(t)),&
&g_{0}(z)=z \in D(\slit(0)). \label{eq:KL}
\end{align}
Here, the dot on $a_t$ stands for the $t$-derivative.
We call \eqref{eq:KL} the chordal Komatu--Loewner equation~\cite{BF08, CFR16}
and \eqref{eq:KLs} the Komatu--Loewner equation for the slits~\cite{BF08, CF18}.
We use the symbol $\zeta$ to denote the explosion time of the solution $\slit(t)$
to \eqref{eq:KLs}.
Moreover, we define $F_t:=\{z \in D; t_z \leq t\}$ for $t \in [0, \zeta)$, where
$t_z:=\zeta \wedge \sup\{t>0; \lvert g_t(z)-\xi(t) \rvert>0\}$ for $z \in D=D(\slit(0))$.
Then $\{F_t\}_{t \in [0, \zeta)}$ is a family of \emph{continuously}
growing hulls~\cite[Definition~4.2]{Mu18} with $\hcap^D(F_t)=a_t$,
$g_t$ is the canonical map from $D \setminus F_t$ onto $D(\slit(t))$
for each $t \in [0, \zeta)$, and it follows that
\begin{equation} \label{eq:shrink}
\bigcap_{\delta>0}\overline{g_t(F_{t+\delta} \setminus F_t)}=\{\xi(t)\}.
\end{equation}
We call a function $\xi(t)$ satisfying \eqref{eq:shrink}
the \emph{driving function} of $\{F_t\}$.
See \cite[Section~5]{CF18} for the proof of these facts.
We remark that they are valid also in the case $N=0$,
except that \eqref{eq:KLs} does not appear.
Hence we may put $\zeta=\infty$.
In this case, the complex Poisson kernel is
\[
\Psi_{\uhp}(z, \xi_0)=-\frac{1}{\pi}\frac{1}{z-\xi_0},
\]
and \eqref{eq:KL} is called the chordal Loewner equation.

Let $\alpha$ be a non-negative function on $\Slit$ homogeneous with degree 0.
Here, a function $f(\slit)$ is said to be homogeneous with degree $\delta \in \R$
if $f(c\slit)=c^{\delta}f(\slit)$ holds for all $c>0$ and $\slit \in \Slit$.
Let $\beta$ be a function on $\Slit$ homogeneous with degree $-1$.
We further suppose that both $\alpha$ and $\beta$ are locally Lipschitz continuous.
$\skle_{\alpha, \beta}$ on a standard slit domain $D$~\cite[Section~5]{CF18}
is defined as the random continuously growing hulls $F_t$ in $D$
that are obtained via the procedure above with $a_t=2t$
and $\xi(t)$ given by the following stochastic differential equation (SDE)
\begin{equation} \label{eq:SKLE}
d\xi(t)=\alpha(\slit(t)-\widehat{\xi}(t))\,dB_t+\beta(\slit(t)-\widehat{\xi}(t))\,dt.
\end{equation}
Here, $\widehat{\xi}(t)$ stands for the $3N$-dimensional vector
whose first $N$ entries are zero and last $2N$ entries are $\xi(t)$.
In this case, we should regard \eqref{eq:KLs} and \eqref{eq:SKLE} together
as a system of SDEs, and $\zeta$ above is replaced by
the explosion time of the solution $(\xi(t), \slit(t))$ to this system.
$\sle_{\kappa}$ is a special case of this definition
where $N=0$, $\alpha=\sqrt{\kappa}$ and $\beta=0$.

Since it is sometimes convenient to regard $\alpha$ and $\beta$ in \eqref{eq:SKLE}
as functions on $\R \times \Slit$,
we introduce the notation $f(\xi_0, \slit):=f(\slit-\widehat{\xi_0})$
for a function $f$ on $\Slit$.
The function $f(\xi_0, \slit)$ so defined has
the \emph{invariance under horizontal translation}
$f(\xi_0, \slit)=f(0, \slit-\widehat{\xi_0})$.
Conversely, we define $\tilde{f}(\slit):=\tilde{f}(0, \slit)$
if a function $\tilde{f}$ on $\R \times \Slit$ has this invariance.

\subsection{Laplacian-$b$ motion}
\label{subsec:LM}

Let $Z^{\uhp}=(Z^{\uhp}_t, \mathbb{P}^{\uhp}_{z_0})$ be
an absorbing Brownian motion in $\uhp$ starting at $z_0 \in \uhp$
and $P^{\uhp}(t,z,dw)$ be the transition probability of $Z^{\uhp}$.
Doob's $h$-transform $\hat{Z}=(\hat{Z}_t, \hat{\mathbb{P}}_{z_0})$
with harmonic function $h(z)=\Im z$ is called
an \emph{$\uhp$-excursion}~(\cite[Section~5.3]{La05}, \cite[Section~3.5]{La06}).
In other words, $\hat{Z}$ is a strong Markov process with transition probability
\begin{equation} \label{eq:Hexc}
\hat{P}(t,z,dw):=\frac{\Im w}{\Im z}P^{\uhp}(t,z,dw), \quad t \geq 0,\ z \in \uhp.
\end{equation}
(See \cite{CW05} or \cite{Do84} for a general definition of $h$-transform.)
We can observe from \eqref{eq:Hexc}
that $\Re \hat{Z}$ and $\Im \hat{Z}$ are independent,
that $\Re \hat{Z}$ is just a standard Brownian motion
and that $\Im \hat{Z}$ is the $h$-transform
of a standard Brownian motion with $h(x)=x$.
By Chapter~VI, Section~3 of \cite{RY99}, it is a three-dimensional Bessel process.
Therefore, we can define an $\uhp$-excursion $\hat{Z}$
starting at a boundary point $z_0 \in \partial \uhp$ as well,
and the lifetime of $\hat{Z}$ is infinite $\hat{\mathbb{P}}_{z_0}$-almost surely.

Let $D':=\uhp \setminus \bigcup_{j=1}^N A_j$,
where $A_j \subset \uhp$, $1 \leq j \leq N$, are mutually disjoint,
compact continua with smooth boundaries.
Put
\[
Q(z, D'):=\hat{\mathbb{P}}_{z}
\left(\hat{Z}(0, \infty) \subset D'\right),\quad z \in D' \cup \partial \uhp.
\]
It then follows from \eqref{eq:Hexc} that
\begin{equation} \label{eq:Qformula}
Q(z, D')=1-\hat{\mathbb{P}}_{z}\left(\hat{\tau}_{D'}<\infty\right)
=1-\frac{\mean{\uhp}{z}{\Im Z^{\uhp}_{\tau_{D'}}; \tau_{D'}<\infty}}{\Im z},
\quad z \in D',
\end{equation}
where $\hat{\tau}_{D'}$ and $\tau_{D'}$ are
the exiting times from $D'$ of $\hat{Z}$ and $Z^{\uhp}$, respectively.
$Q(\xi_0, D')$ for $\xi_0 \in \partial \uhp$ is the limit of \eqref{eq:Qformula}
as $z \to \xi_0$.
We see later that $Q(\xi_0, D')$ is a differentiable function of $\xi_0 \in \R$.
In particular, we can consider $(\partial \log Q/\partial \xi_0)(\xi_0, D')$.

Based on the function $Q$,
the construction of $\lm_b$ for $b>-1/2$ is done as follows:
Let $\kappa:=6/(2b-1)$ and $(g^0_t, F^0_t)_{t \geq 0}$ be an $\sle_{\kappa}$
driven by $U(t)=\sqrt{\kappa}B_t$,
where $B=(B_t)_{t \geq 0}$ is a standard Brownian motion
on a filtered probability space
$(\Omega, \mathcal{F}_{\infty}, (\mathcal{F}_t)_{t \geq 0}, \mathbf{P})$
with the usual conditions.
We put $T_{D'}:=\inf\{t>0; F^0_t \nsubseteq D'\}$ and
$D'_t:=g^0_t(D' \setminus F^0_t)$,
and define the processes $X$ and $X^{(n)}$ for $n \in \N$ by
\[
X_t:=\frac{6-\kappa}{2\sqrt{\kappa}}
\frac{\partial \log Q}{\partial \xi_0}(U(t), D'_t)\mathbf{1}_{\{t<T_{D'}\}}
\quad \text{and}\quad X^{(n)}_t:=X_t \mathbf{1}_{\{t<T_n\}},
\]
where $T_n:=T_{D'} \wedge \inf\{t>0; \lvert X_t \rvert \geq n\}$.
We further define the exponential local martingale
\[
M^{(n)}_t:=\exp\left(\int_0^t X^{(n)}_s \,dB_s
-\frac{1}{2}\int_0^t \lvert X^{(n)}_s \rvert^2 \,ds\right)
\]
and the stopping times
$S_{n,m}:=T_n \wedge \inf\{t>0; \lvert M^{(n)}_t \rvert \geq m\}$, $m \in \N$.
The stopped local martingale
\[
M^{(n)}_{t \wedge S_{n,m}}=\exp\left(\int_0^{t \wedge S_{n,m}} X_s \,dB_s
-\frac{1}{2}\int_0^{t \wedge S_{n,m}} \lvert X_s \rvert^2 \,ds\right)
\]
is then bounded and thus a uniformly integrable martingale.
If we define the measure $\mathbf{Q}^{(n,m)}$ on $\mathcal{F}_{\infty}$ by
\[
\mathbf{Q}^{(n,m)}(A):=\mathbf{E}^{\mathbf{P}}
\left[\mathbf{1}_A \lim_{t \to \infty}M^{(n)}_{t \wedge S_{n,m}}\right],
\quad A \in \mathcal{F}_{\infty},
\]
then by the Maruyama--Girsanov theorem
\[
B^{(n,m)}_t:=B_t-\int_0^{t \wedge S_{n,m}} X_s \,ds,
\quad t \in [0, \infty),
\]
is a standard Brownian motion under the measure $\mathbf{Q}^{(n,m)}$.
In other words, the driving function $U(t)$ of $\{F^0_t\}$ satisfies
the following SDE under $\mathbf{Q}^{(n,m)}$:
\begin{equation} \label{eq:GirLM}
dU(t)=\frac{6-\kappa}{2}\frac{\partial \log Q}{\partial \xi_0}(U(t), D'_t)\,dt
+\sqrt{\kappa}\,dB^{(n,m)}_t,\quad 0 \leq t \leq S_{n,m}.
\end{equation}
(The law of) $\{F^0_t\}_{0 \leq t \leq S_{n,m}}$ under $\mathbf{Q}^{(n, m)}$
is regarded as $\lm_b$ stopped by $S_{n,m}$.
Motivated by \eqref{eq:GirLM}, Lawler~\cite{La06} defined $\lm_b$
as the random Loewner evolution driven by a solution to \eqref{eq:GirLM}
with $B^{(n,m)}$ replaced by a Brownian motion independent of $n$ and $m$.

\begin{rem} \label{rem:GirLM}
The exponent $b$ implicitly appears in \eqref{eq:GirLM}
in the sense that $(6-\kappa)/2=\kappa b$.
Since the chordal Loewner equation that we consider in this paper
is the linear time-change of \cite[Eq.\ (4.16)]{La06},
the SDE~\eqref{eq:GirLM} is the time-change of the original one
given in \cite[Section~4.4]{La06}.
The exponential martingale $M^{(n)}_t$ is originally obtained
in a different fashion as well.
\end{rem}

In this article, we only refer to Lawler~\cite[Section~4]{La06}
and do not mention the reason why the random evolution
defined as above is a candidate of the scaling limit of Laplacian-$b$ random walk.
However, let us comment that his argument is remarkable
in that we can extend $\sle_{\kappa}$ to multiply connected domains
by the Maruyama--Girsanov transformation.

\section{Relationship between LM and SKLE}
\label{sec:main}

\subsection{Reformulation of Laplacian-$b$ motion as SKLE}
\label{subsec:reconst}

In this section, we observe the relationship between LM and SKLE.
We take $\lm_b$ as our starting point.
Let $D'$ and $(g^0_t, F^0_t)$ be as in Section~\ref{subsec:LM}.
To move our domain from $D'$ to a standard slit domain
as in Section~\ref{subsec:SKLE},
we perform a preparatory transformation on $(g^0_t, F^0_t)$ as follows:
Let $h$ be a conformal map hydrodynamically normalized
from $D'$ to a standard slit domain $D$,
whose existence and uniqueness are ensured by \cite[Proposition~2.3]{Mu18}.
For each $t$, we denote the canonical map of the hull $F_t:=h(F^0_t)$
by $g_t \colon D \setminus F_t \to D_t$ and put $h_t:=g_t \circ h \circ (g^0_t)^{-1}$.
Then by \cite[Theorem~4.8]{Mu18},
$\{F_t\}_{t \leq S_{n,m}}$ is a family of continuously growing hulls in $D$,
the map $g_t$ satisfies \eqref{eq:KL}
with $\dot{a}_t=2h_t'(U(t))^2$ and $\xi(t)=h_t(U(t))$,
and the solution $\slit(t)$ to \eqref{eq:KLs} with these $a_t$ and $\xi(t)$
enjoys $D(\slit(t))=D_t$.
Moreover, it follows from \cite[Eq.~(4.7)]{Mu18} that
\begin{align}
d\xi(t) &= \biggl\{\frac{6-\kappa}{2}\left(h_t'(U(t))
	\frac{\partial \log Q}{\partial \xi_0}(U(t), D'_t)-h_t''(U(t))\right) \biggr.
	\label{eq:Ito_LM} \\
&\phantom{=}{}\biggl. -h_t'(U(t))^2 b_{\bmd}(\xi(t), \slit(t)) \biggr\} \,dt
	+\sqrt{\kappa}h_t'(U(t))\,dB^{(n,m)}_t
	\notag
\end{align}
for $t \leq S_{n,m}$ under the measure $\mathbf{Q}^{(n,m)}$.
Here, $b_{\bmd}$ is the \emph{BMD domain constant},
a locally Lipschitz function on $\R \times \Slit$
which is invariant under horizontal translation and homogeneous with degree~$-1$.
See Eq.~(6.1) and Lemma~6.1 of \cite{CF18}.
The aim of this subsection is to rewrite \eqref{eq:Ito_LM}
in the form of \eqref{eq:SKLE} and to check that its coefficients satisfy
the conditions that are required in Section~\ref{subsec:SKLE}
to define $\skle_{\alpha, \beta}$.

To deform the expression~\eqref{eq:Ito_LM} into a form independent of $D'_t$,
we utilize the conformal transformation rule~\cite[Eq.~(3.12)]{La06}
for $\partial \log Q/\partial \xi_0$:
\begin{equation} \label{eq:conftrans}
\frac{\partial \log Q}{\partial \xi_0}(U(t), D'_t)
=h_t'(U(t))\frac{\partial \log Q}{\partial \xi_0}(\xi(t), D_t)
	+\frac{h_t''(U(t))}{h_t'(U(t))}.
\end{equation}
Substituting \eqref{eq:conftrans} into \eqref{eq:Ito_LM} yields
\begin{align*}
d\xi(t) &= \left\{ \frac{6-\kappa}{2}
	\frac{\partial \log Q}{\partial \xi_0}(\xi(t), \slit(t))
	-b_{\bmd}(\xi(t), \slit(t)) \right\}h_t'(U(t))^2\,dt \\
	&\phantom{=}{}+\sqrt{\kappa}h_t'(U(t))\,dB^{(n,m)}_t, \quad t \leq S_{n,m},
\end{align*}
where $Q(\xi_0, \slit):=Q(\xi_0, D(\slit))$ for $\xi_0 \in \R$ and $\slit \in \Slit$.

We now reparametrize $\{F_t\}$ by the half-plane capacity relative to $D$,
that is, $\check{F}_t:=F_{a^{-1}(2(t \wedge \check{S}_{n,m}))}$
with $\check{S}_{n,m}:=a(S_{n,m})/2$.
By this time-change, we have $\hcap^D(\check{F}_t)=2t$
for all $t \in [0, \check{S}_{n,m})$.
All the other quantities that are reparametrized in the same manner
are indicated by adding check mark symbol as well.
Brownian motion $(\check{B}^{(n,m)}_t)_{t \geq 0}$ then exists
on some enlargement of the filtered probability space
$(\Omega, (\check{\mathcal{F}}_t)_{t \geq 0}, \mathbf{Q}^{(n,m)})$,
satisfying
\begin{align}
d\check{\xi}(t) &= \left\{\frac{6-\kappa}{2}
	\frac{\partial \log Q}{\partial \xi_0}(\check{\xi}(t), \check{\slit}(t))
	-b_{\bmd}(\check{\xi}(t), \check{\slit}(t)) \right\}dt \label{eq:norm_LM} \\
	&\phantom{=}{}+\sqrt{\kappa}\,d\check{B}^{(n,m)}_t,
	\quad t<\check{S}_{n,m}, \notag
\end{align}
by \cite[Theorem~V.1.7]{RY99}.

The expression~\eqref{eq:norm_LM} suggests the relation
between $\lm_b$ and $\skle_{\alpha, \beta}$.
Namely, if we can define $\skle_{\alpha, \beta}$ on $D$ with
\begin{equation} \label{eq:parameter}
\alpha(\slit):=\sqrt{\kappa}\quad \text{and} \quad \beta(\slit)
:=\frac{6-\kappa}{2}\frac{\partial \log Q}{\partial \xi_0}(0, \slit)-b_{\bmd}(0, \slit),
\end{equation}
and pull it back to $D'$ by the map $h \colon D' \to D$,
then the resulting random evolution is a time-changed $\lm_b$ on $D'$.
Indeed, we can do the reverse procedure, that is,
start at \eqref{eq:norm_LM} to get \eqref{eq:GirLM}
by a similar computation and enlargement of the underlying probability space
using the inverse map $h^{-1}$.
The remaining thing is thus to check $\alpha$ and $\beta$ of \eqref{eq:parameter}
satisfies the conditions in Section~\ref{subsec:SKLE}:

\begin{prop} \label{lem:LMLip}
$(\partial \log Q/\partial \xi_0)(\xi_0, \slit)$ is invariant
under horizontal translations, homogeneous with degree~$-1$
and locally Lipschitz continuous.
\end{prop}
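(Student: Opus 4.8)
The plan is to reduce all three assertions to properties of a single harmonic function, and then read them off from the horizontal-translation invariance, the scaling invariance, and the boundary regularity of the $\uhp$-excursion $\hat Z$. First I would rewrite \eqref{eq:Qformula}: since $\hat Z$ has infinite lifetime and leaves $D(\slit)$ only by hitting the slits $\bigcup_j C_j(\slit)$, it gives $Q(z, D(\slit)) = 1 - u_\slit(z)/\Im z$, where $u_\slit(z) := \mean{\uhp}{z}{\Im Z^{\uhp}_{\tau_{D(\slit)}}; \tau_{D(\slit)}<\infty}$. Here $u_\slit$ is the bounded harmonic function on $D(\slit)$ with boundary values $0$ on $\R$ and $y_j$ on $C_j(\slit)$; being constant on each slit, it is in fact BMD-harmonic. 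Because $u_\slit$ vanishes on $\R$, sending $z\to\xi_0\in\R$ turns $Q$ into a normal derivative,
\[
Q(\xi_0,\slit) = 1 - \partial_y u_\slit(\xi_0,0).
\]
An excursion from a boundary point avoids the compact slits with probability strictly between $0$ and $1$, so $Q$ is bounded away from $0$ on compact subsets of $\R\times\Slit$, and $\log Q$ is exactly as regular as $Q$.

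The two invariances are then immediate. Translating by $-\xi_0$ carries $D(\slit)$ to $D(\slit-\widehat{\xi_0})$, so translation invariance of $\hat Z$ yields $Q(\xi_0,\slit)=Q(0,\slit-\widehat{\xi_0})$, and differentiating in $\xi_0$ shows that $(\partial\log Q/\partial\xi_0)(\xi_0,\slit)$ is invariant under horizontal translations. Scaling invariance of $\hat Z$ gives $Q(c\xi_0,c\slit)=Q(\xi_0,\slit)$ for every $c>0$, since the scaled excursion avoids the scaled slits exactly when the original one does; thus $Q$ is homogeneous of degree $0$, and the chain rule upgrades this to $(\partial\log Q/\partial\xi_0)(c\xi_0,c\slit)=c^{-1}(\partial\log Q/\partial\xi_0)(\xi_0,\slit)$, i.e. homogeneity of degree $-1$.

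For local Lipschitz continuity the key observation is that $\xi_0$ lies on the smooth part $\R$ of $\partial D(\slit)$ and, as $(\xi_0,\slit)$ ranges over a compact set, stays uniformly far from the slits, which remain in a bounded region at heights bounded below. I would exploit $u_\slit|_\R=0$ to reflect $u_\slit$ oddly across $\R$, obtaining a function harmonic in a fixed disk $B(\xi_0,r)$ that misses the doubled slits; then $\partial_y u_\slit(\xi_0,0)$ and $\partial_{\xi_0}\partial_y u_\slit(\xi_0,0)$ become interior derivatives, controlled by interior estimates. To extract the dependence on $\slit$, I would transport $u_\slit$ and $u_{\slit'}$ to a common reference domain via a diffeomorphism depending Lipschitz-continuously on $\slit$ (or couple the absorbing Brownian motions directly): the boundary data then differ by $O(|\slit-\slit'|)$, the maximum principle propagates this bound to $B(\xi_0,r)$, and the interior estimates transfer it to the derivatives at $\xi_0$. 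Together with the lower bound on $Q$, this makes $(\partial\log Q/\partial\xi_0)(\xi_0,\slit)=-\,\partial_{\xi_0}\partial_y u_\slit(\xi_0,0)/Q(\xi_0,\slit)$ locally Lipschitz.

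The hard part will be precisely this moving-boundary comparison: because $D(\slit)$ and $D(\slit')$ are genuinely different domains, one cannot subtract $u_\slit$ and $u_{\slit'}$ outright, and the Lipschitz constant can be kept uniform only after restricting to compact subsets of $\Slit$, away from the degenerations where slits collide or shrink. A more economical alternative, better suited to the present framework, is to express $\partial_y u_\slit(\xi_0,0)$ through the BMD complex Poisson kernel $\Psi_{D(\slit)}(\cdot,\xi_0)$ and then invoke the local Lipschitz regularity of $\Psi_{D(\slit)}$ in $(\xi_0,\slit)$ already established in \cite{CFR16, CF18, Mu18}; this reduces the claim to the same kernel estimates that underlie the functions $b_l$ and $b_{\bmd}$.
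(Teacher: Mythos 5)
Your reduction is, in substance, the same as the paper's. Writing $u_\slit=\sum_{j=1}^N y_j\varphi_{\slit}^{(j)}$ for the harmonic function with boundary values $y_j$ on $C_j(\slit)$ and $0$ on $\partial\uhp$, the paper derives from \eqref{eq:Qformula} the identity $Q(\xi_0,\slit)=1+\sum_{j}y_j\partial_{\mathbf{n}_{\xi_0}}\varphi_{\slit}^{(j)}(\xi_0)$, which is exactly your $Q(\xi_0,\slit)=1-\partial_y u_\slit(\xi_0,0)$ since $\partial_{\mathbf{n}_{\xi_0}}=-\partial_y$, and then the same quotient formula for $\partial\log Q/\partial\xi_0$. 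Your derivation of translation invariance and degree $-1$ homogeneity from the invariance of the $\uhp$-excursion law is correct and equivalent to the paper's appeal to the transformation rule \eqref{eq:conftrans} (apply it with $h(z)=z-c$ and $h(z)=cz$); making explicit the local lower bound on $Q$ is also a legitimate point that the paper uses implicitly. One aside is wrong, however: constancy on each slit does \emph{not} make $u_\slit$ BMD-harmonic — zero flux around each slit is also required, and $u_\slit$ fails it (already for $N=1$ the flux of $\varphi_{\slit}^{(1)}$ around the slit is nonzero). This matters because your fallback hints at representing $u_\slit$ through the BMD kernel $\Psi_{D(\slit)}$, which that false premise would seem to license.

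The genuine gap is the local Lipschitz continuity in $(\xi_0,\slit)$, and it sits exactly where you admit the hard part is; neither of your two routes closes it. In the primary route, the odd reflection and interior estimates are fine for fixed $\slit$, but the ``transport to a common reference domain'' step does not work as sketched: a diffeomorphism pullback destroys harmonicity, so the maximum principle no longer applies to the difference of the transported functions, while a conformal transport requires Lipschitz control of the uniformizing maps in $\slit$ — precisely the kind of estimate one is trying to prove. In the fallback route, the claim that the needed regularity is ``already established'' in \cite{CFR16, CF18, Mu18} overstates the literature: the object that must be shown locally Lipschitz is the second-order quantity $\partial_{\xi_0}\partial_{\mathbf{n}_{\xi_0}}\varphi_{\slit}^{(j)}(\xi_0)$, whereas \cite[Eq.~(9.24)]{CFR16} gives the Lipschitz property only of the first-order derivative $\partial_{\mathbf{n}_{\xi_0}}\varphi_{\slit}^{(j)}(\xi_0)$. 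The paper closes this gap not by citation but by re-running the argument of \cite[Section~9]{CFR16} (the derivation of Eq.~(9.24) from Eq.~(9.12)) with one more $\xi_0$-derivative; likewise, the estimates behind $b_l$ and $b_{\bmd}$ concern $\Psi_{D(\slit)}$ at the slit endpoints, not boundary derivatives of harmonic measure at $\xi_0$, so they do not apply verbatim. That extension of the kernel estimates is the actual mathematical content of the Lipschitz assertion, and your proposal stops short of it.
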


\begin{proof}
The translation invariance and homogeneity with degree~$-1$
are obvious from the conformal transformation rule \eqref{eq:conftrans}.
We therefore prove only the local Lipschitz continuity.

We put $D:=D(\slit)$ and $C_j:=C_j(\slit)$ for $\slit \in \Slit$,
and denote by $K_D$ the Poisson kernel of absorbing Brownian motion in $D$.
We also denote by $\varphi_{\slit}^{(j)}(z)$ the harmonic measure of $C_j$.
In other words, $\varphi_{\slit}^{(j)}$ is a unique bounded harmonic function
on $D$ with boundary values $1$ on $C_j$
and $0$ on $\partial \uhp \cup \bigcup_{k \neq j} C_k$.
The quantity $Q(\xi_0, \slit)$ for $\xi_0 \in \partial \uhp$ is computed
by using \eqref{eq:Qformula} and $K_D$ as follows:
\begin{align}
Q(\xi_0, \slit)&=\lim_{z \to \xi_0}Q(z, D)
=1-\lim_{z \to \xi_0}
\frac{\mean{\uhp}{z}{\Im Z^{\uhp}_{\tau_D}; \tau_D<\infty}}{\Im z} \notag \\
&=1+\frac{\partial}{\partial \mathbf{n}_{\xi_0}}\sum_{j=1}^N\int_{\partial_p C_j}
	\Im w K_D(\xi_0, w) \,\lvert dw \rvert \notag \\
&=1+\sum_{j=1}^N y_j \frac{\partial}{\partial \mathbf{n}_{\xi_0}}
	\varphi_{\slit}^{(j)}(\xi_0). \label{eq:Q_ssd}
\end{align}
Here, $\mathbf{n}_{\xi_0}$ stands for the outward unit normal vector at $\xi_0$,
and $\partial_p C_j$ represents the boundary of $\uhp \setminus C_j$
in the path distance topology.
Namely, it consists of the left and right endpoints $z_j$ and $z^r_j$,
the upper side $C_j^+$ of the slit and the lower one $C_j^-$.
It follows from \eqref{eq:Q_ssd} that
\[
\frac{\partial \log Q}{\partial \xi_0}(\xi_0, \slit)
=\left(1+\sum_{j=1}^N y_j \frac{\partial}{\partial \mathbf{n}_{\xi_0}}
	\varphi_{\slit}^{(j)}(\xi_0)\right)^{-1}
	\sum_{j=1}^N y_j \frac{\partial}{\partial \xi_0}
	\frac{\partial}{\partial \mathbf{n}_{\xi_0}}\varphi_{\slit}^{(j)}(\xi_0).
\]
The function $\partial_{\mathbf{n}_{\xi_0}}\varphi_{\slit}^{(j)}(\xi_0)$
is locally Lipschitz by \cite[Eq.~(9.24)]{CFR16}.
We can mimic the argument of \cite[Section~9]{CFR16},
in which \cite[Eq.~(9.24)]{CFR16} was derived from \cite[Eq.~(9.12)]{CFR16},
to show that
$\partial_{\xi_0}\partial_{\mathbf{n}_{\xi_0}}\varphi_{\slit}^{(j)}(\xi_0)$
is also locally Lipschitz.
We thus reach the desired conclusion.
\end{proof}

By Proposition~\ref{lem:LMLip}, we can define $\skle_{\alpha, \beta}$
with $\alpha$ and $\beta$ given by \eqref{eq:parameter} and
thus obtain a time-changed $\lm_b$ by pulling it back to $D'$.
If $D'$ itself is a standard slit domain,
then $\skle_{\alpha, \beta}$ is exactly the same as
the time-changed $\lm_b$ on $D'$.

\subsection{Explosion time of SKLE corresponding to $\lm_b$}
\label{subsec:expl}

In this subsection, we discuss the exit of $\lm_b$ from the domain $D'$.
Our problem is whether $\lm_b$ exit from $D'$, i.e.,
the exit time $T_{D'}$ in Section~\ref{subsec:LM} is finite or not.
We are interested in the behavior of $\lm_b$ around $T_{D'}$ as well.

It is commented in \cite[Section~4.6]{La06} that
$\lm_b$ with $b=(6-\kappa)/(2\kappa)$ for $0 < \kappa \leq 4$
is not likely to exit the domain $D'$, that is,
$T_{D'}$ in Section~\ref{subsec:LM} should be infinite.
This observation is partly based on the fact that
$\sle_{\kappa}$ with $0 < \kappa \leq 4$ is a simple curve
with probability one \cite[Proposition~6.9]{La05}.
However, the proof of the property $T_{D'}=\infty$ has not been known so far.
One of the difficulties is that
there may not exist a single probability measure $\mathbf{Q}$
on $(\Omega, \mathcal{F}_{\infty})$
under which \eqref{eq:GirLM} holds for all $t \in [0, T_{D'})$
with $B^{(n,m)}$ replaced by a Brownian motion independent of $n$ and $m$.
The problem itself may be ``ill-posed'' unless such a measure $\mathbf{Q}$ exists.
We note that, since the SDE~\eqref{eq:GirLM} is not closed
with respect to the unknown variable $U(t)$
but contains another unknown variable $D'_t$,
the construction of $\mathbf{Q}$ via this SDE is not straightforward.

In what follows, the explosion problem of $\skle_{\alpha, \beta}$
with $\alpha$ and $\beta$ given by \eqref{eq:parameter},
which is always ``well-posed'', is addressed
instead of the original exit problem of $\lm_b$.
In Section~\ref{subsec:SKLE}, $\skle_{\alpha, \beta}$ is defined only up to
the explosion time $\zeta$ of the solution $W_t=(\xi(t), \slit(t))$
to the system of SDEs \eqref{eq:KLs} and \eqref{eq:SKLE}.
As described in \cite[Section~1]{Mu19+a}, $\zeta$ should correspond to
the ``exit time'' of $\skle_{\alpha, \beta}$ from the standard slit domain $D$.
On the basis of this observation, the author obtained
the following asymptotic behavior of the solution to \eqref{eq:KLs} around $\zeta$
in \cite[Section~3]{Mu19+a}:
Recall that
\[
R(\xi_0, \slit):=\min_{1 \leq j \leq N}\dist(C_j(\slit), \xi_0)
\]
is a function on $\R \times \Slit$
having the invariance under horizontal translation.
We say that a function \emph{$f$ on $\Slit$ enjoys Condition~{\rm (B')}}
if $f(\slit)$ is bounded on the set $\{\slit \in \Slit; R(\slit)=R(0, \slit)>r\}$
for every $r>0$.
The previous result~\cite[Theorem~3.2]{Mu19+a} asserts that,
under Condition~(B') on the coefficients in \eqref{eq:SKLE},
$\lim_{t \to \zeta} R(W_t) = 0$ holds a.s.\ on the event $\{ \zeta < \infty \}$.
A intuitive picture of this result is that,
if $\lim_{t \to \zeta} \dist(C_j(\slit(t)), \xi(t))=0$,
then the hull $F_t$ should ``exit'' $D$ from the corresponding slit $C_j$.

Now, we check that Condition~(B') holds in the case of $\lm_b$.
Since $\alpha(\slit)=\sqrt{\kappa}$ clearly satisfies this condition,
we only have to prove it for $\beta$ of \eqref{eq:parameter}:

\begin{prop} \label{lem:LMbdd}
$(\partial \log Q/\partial \xi_0)(0, \slit)$ satisfies Condition {\rm (B')}.
\end{prop}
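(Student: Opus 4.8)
The plan is to establish the stronger quantitative bound $\lvert(\partial\log Q/\partial\xi_0)(0,\slit)\rvert\le C/r$ with a universal constant $C$, valid for every $\slit\in\Slit$ with $R(0,\slit)>r$; this trivially implies Condition~(B'). By the formula derived in the proof of Proposition~\ref{lem:LMLip}, together with $Q(\xi_0,\slit)=1+\sum_{j}y_j\,\partial_{\mathbf{n}_{\xi_0}}\varphi^{(j)}_{\slit}(\xi_0)$, one has $(\partial\log Q/\partial\xi_0)(0,\slit)=Q'(0)/Q(0)$, where the prime denotes $\partial_{\xi_0}$ and $Q(\xi_0)$ abbreviates $Q(\xi_0,\slit)$. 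The key geometric observation is that $R(0,\slit)>r$ forces the half-disk $U:=\{z\in\uhp;\ \lvert z\rvert<r\}$ to contain no slit points, so that $U\subset D:=D(\slit)$, and the distant (or long) slits will enter the estimate only as boundary data of a positive harmonic function away from $U$.

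First I would pass to the harmonic function $W(z):=\Im z\cdot Q(z,D)$. By \eqref{eq:Qformula} this is the difference of $\Im z$ and the bounded positive harmonic function $z\mapsto\mean{\uhp}{z}{\Im Z^{\uhp}_{\tau_{D}};\tau_{D}<\infty}$, hence $W$ is harmonic and strictly positive in $D$ and vanishes continuously on $\partial\uhp$; moreover $\partial_y W(\xi_0,0)=Q(\xi_0,\slit)$ and $\partial_{\xi_0}\partial_y W(0,0)=Q'(0)$. Both $W$ and the model function $h_0(z):=\Im z$ are positive, harmonic in $U$, and vanish on the flat boundary portion $(-r,r)$. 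Applying the boundary Harnack principle on $U$, which here amounts to comparing $W$ with the explicit $h_0$ (their quotient being precisely $Q(\cdot,D)$), yields $Q(z,D)\asymp Q(0,\slit)$ for all $z\in\uhp$ with $\lvert z\rvert<r/2$, with a constant depending only on the scale; equivalently $W(z)\asymp\Im z\cdot Q(0,\slit)$ there.

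Next I would feed this two-sided bound into interior derivative estimates for harmonic functions. For $z$ with $\lvert z\rvert<r/4$, the estimate on the ball $B(z,\Im z/2)\subset U$ gives $\lvert\nabla W(z)\rvert\lesssim(\Im z)^{-1}\sup_{B(z,\Im z/2)}\lvert W\rvert\lesssim Q(0,\slit)$, uniformly up to $\partial\uhp$. Reflecting $W$ oddly across $(-r,r)$ by the Schwarz reflection principle makes $\partial_y W$ extend to a function harmonic on $B(0,r/4)$, even in $y$, bounded there by $CQ(0,\slit)$, and equal to $Q(\cdot,\slit)$ on the real axis. A final interior gradient estimate for this extended function at $0$ bounds its $\xi_0$-derivative, namely $\lvert Q'(0)\rvert\lesssim Q(0,\slit)/r$. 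Dividing by $Q(0,\slit)>0$ then gives $\lvert(\partial\log Q/\partial\xi_0)(0,\slit)\rvert\lesssim 1/r$, as desired.

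The main obstacle is to ensure the comparison constant is genuinely independent of $\slit$: the boundary Harnack principle and the gradient estimates must be invoked on the \emph{fixed} half-disk $U$, so their constants depend only on $r$ and not on the (possibly long or far-away) slits, which affect $W$ only through its boundary values outside $U$. By the homogeneity of degree~$-1$ already recorded in Proposition~\ref{lem:LMLip}, one may in fact rescale to reduce to the case $r=1$, after which all constants are absolute and the claimed $1/r$-scaling is transparent. A secondary point requiring care is the strict interior positivity $Q(z,D)>0$ (so that $W>0$ and the division is legitimate), which holds because the $\uhp$-excursion from any interior point avoids the finitely many slits with positive probability.
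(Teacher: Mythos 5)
Your proof is correct, and it reaches the desired bound by a genuinely different route than the paper, so it is worth comparing the two. Both arguments share the same skeleton: the homogeneity of degree $-1$ reduces Condition (B') to a bound of the form $C/r$ on $\{\slit \in \Slit;\, R(0,\slit)>r\}$, and both exploit the fact that on this set the half-disk $B(0,r)\cap\uhp$ contains no slit, so the slits can influence $Q$ near $0$ only through boundary data on the half-circle. The difference lies in how this is turned into an estimate. The paper works directly with $Q$, which is harmonic for the $\uhp$-excursion, and applies the strong Markov property at the exit from the unit half-disk $\disk_+$ to write $Q(z,D)$ as an explicit Poisson-type integral of $Q(e^{i\theta},D)$ against the excursion kernel $K_{\disk_+}(z,e^{i\theta})\sin\theta/\Im z$; differentiating this explicit kernel exhibits $(\partial \log Q/\partial\xi_0)(0,\slit)$ as a ratio of integrals against $\sin^2\theta\cos\theta$ and $\sin^2\theta$, which is bounded by $2$. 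You instead pass to $W(z)=\Im z\cdot Q(z,D)$, harmonic for ordinary Brownian motion (the $h$-transform picture of the same structural fact), and replace the kernel computation by soft elliptic estimates: the boundary Harnack principle on the half-disk, interior gradient bounds, and Schwarz reflection to reach the tangential derivative of $\partial_y W$ at the origin. What the paper's computation buys is an explicit, essentially sharp constant ($2/r$ on $\{R>r\}$) with no appeal to BHP; what your argument buys is robustness, since no computation of $K_{\disk_+}$ or its derivatives is needed and the scheme would survive replacing the flat boundary portion by, say, a Lipschitz one. Two points you rightly flagged should indeed be kept explicit in a written version: the strict positivity of $Q$ on the half-disk (needed both for BHP and for dividing by $Q(0,\slit)$), and the identification $Q(\xi_0,\slit)=\partial_y W(\xi_0,0)$, which is legitimate because the boundary value $Q(\xi_0,\slit)$ in the paper is precisely the limit of $W(z)/\Im z$ as $z\to\xi_0$ and your reflected extension of $W$ is smooth across $(-r,r)$.
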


\begin{proof}
By the scaling property that is derived from \eqref{eq:conftrans},
it suffices to prove that $(\partial \log Q/\partial \xi_0)(0, \slit)$
is bounded when $R(\slit)>1$.
By the strong Markov property of the $\uhp$-excursion $\hat{Z}_t$,
we have
\[
Q(z, D)=\hat{\mathbb{E}}_z\left[Q(\hat{Z}_{\hat{\sigma}_{\disk_+}}, D)\right]
=\int_0^{\pi}Q(e^{i\theta}, D)K_{\disk_+}(z, e^{i\theta})
	\frac{\sin\theta}{\Im z}\,d\theta
\]
for $\lvert z \rvert < 1$.
Here, $\hat{\sigma}_{\disk_+}$ denotes the hitting time of $\hat{Z}$
to $\disk_+ =\disk \cap \uhp$.
Letting $z \to \xi_0 \in \partial \uhp$ yields that
\[
Q(\xi_0, \slit)=-\int_0^{\pi}Q(e^{i\theta}, D)\frac{\partial}{\partial \mathbf{n}_{\xi_0}}
	K_{\disk_+}(\xi_0, e^{i\theta})\sin\theta \,d\theta.
\]
Hence we have
\[
\frac{\partial \log Q}{\partial \xi_0}(\xi_0, \slit)
=\frac{\frac{\partial}{\partial \xi_0}Q(\xi_0, \slit)}{Q(\xi_0, \slit)}
=\frac{\int_0^{\pi}Q(e^{i\theta}, D)\frac{\partial}{\partial \xi_0}
\frac{\partial}{\partial \mathbf{n}_{\xi_0}}K_{\disk_+}(\xi_0, e^{i\theta})\sin\theta \,d\theta}
{\int_0^{\pi}Q(e^{i\theta}, D)\frac{\partial}{\partial \xi_0}
K_{\disk_+}(\xi_0, e^{i\theta}) \sin\theta \,d\theta}.
\]
Computing the Poisson kernel $K_{\disk_+}$ gives the following expression:
\[
\frac{\partial \log Q}{\partial \xi_0}(0, \slit)
=\frac{2\int_0^{\pi}Q(e^{i\theta},D)\sin^2\theta\cos\theta \,d\theta}
	{\int_0^{\pi}Q(e^{i\theta},D)\sin^2\theta \,d\theta}.
\]
The right-hand side is bounded by two.
\end{proof}

Since we have established Condition~(B'),
it is natural in view of the intuitive picture above that
we interpret the exit problem of $\lm_b$ as follows:
$\zeta=\infty$ holds a.s.\ if and only if $0<\kappa\leq 4$.
However, this conjecture is still difficult to prove for general values of $\kappa$.
In what follows, we look only at the special case $\kappa=6$ (i.e., $b=0$).
The situation becomes much simpler in this case
since the drift term of \eqref{eq:GirLM} vanishes.
Hence $\lm_0$ is just $\sle_6$, and
especially no change of measures is necessary to obtain $\lm_0$.
The reconstruction procedure in Section~\ref{subsec:reconst}
thus provides another proof of \cite[Theorem~4.2]{CFS17},
which shows that the law of $\sle_6$ coincides with $\skle_{\sqrt{6}, -b_{\bmd}}$
modulo time-change until it exits $D$.
The following result follows from the fact that the $\sle_6$ hull
is space-filling with probability one~\cite[Proposition~6.10]{La05}:

\begin{thm} \label{prop:LM_0}
The explosion time $\zeta$ of the solution $W_t=(\xi(t), \slit(t))$
to \eqref{eq:KLs} and \eqref{eq:SKLE} whose coefficients are
$\alpha(\slit)=\sqrt{6}$ and $\beta(\slit)=-b_{\bmd}(\slit)$
is finite with probability one.
\end{thm}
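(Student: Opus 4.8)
The plan is to reduce the statement to the space-filling property of $\sle_6$ by means of the identification established in Section~\ref{subsec:reconst}. The decisive simplification when $\kappa=6$ (i.e.\ $b=0$) is that the drift in \eqref{eq:GirLM} vanishes, so the exponential martingale $M^{(n)}$ is trivial, $\mathbf{Q}^{(n,m)}=\mathbf{P}$ and $B^{(n,m)}=B$; hence $\lm_0$ is literally $\sle_6$ under $\mathbf{P}$, and the localizing times $S_{n,m}$ may be sent to the exit time. Taking $D'=D$ to be the standard slit domain carrying the SKLE (so that $h$ is the identity), the reconstruction then shows, as recorded in \cite[Theorem~4.2]{CFS17}, that $\skle_{\sqrt6,-b_{\bmd}}$ on $D$ is precisely the $\sle_6$ hull $F^0_t$ in $\uhp$ reparametrized by the BMD half-plane capacity $\hcap^D$, and that this identification is valid on $[0,\zeta)$, corresponding in the original half-plane-capacity time to $[0,T_D)$ with $T_D=\inf\{t>0; F^0_t\nsubseteq D\}$. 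Under this correspondence the slit configuration $\slit(t)\in\Slit$ stays well defined exactly while $F^0_t$ avoids every slit $C_j$, and it degenerates precisely when $F^0_t$ first meets the union of the slits; thus $\zeta=\tfrac12\lim_{t\uparrow T_D}a_t$, where $a_t=\hcap^D(F^0_t)$.

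First I would prove that $T_D<\infty$ almost surely. By \cite[Proposition~6.10]{La05} the $\sle_6$ trace is space-filling, so almost surely the increasing hulls $F^0_t$ cover every compact subset of $\uhp$; in particular they eventually swallow the compact slits $C_j$. Hence $F^0_t$ meets some $C_j$ at a finite time, which gives $T_D<\infty$ a.s.

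It then remains to verify that the SKLE clock does not run to infinity before the hull reaches a slit, i.e.\ that $\lim_{t\uparrow T_D}a_t<\infty$; granting this, the normalization $\hcap^D(\check F_t)=2t$ identifies $\zeta$ with $\tfrac12\lim_{t\uparrow T_D}a_t$, which is then finite, and combined with $T_D<\infty$ yields $\zeta<\infty$ almost surely. For this I would use that the ordinary half-plane capacity is $\hcap(F^0_t)=2t\le 2T_D$, so that, by continuity of the $\sle_6$ trace, the hulls $F^0_t$ remain inside a fixed bounded subset of $D$ as $t\uparrow T_D$; the BMD half-plane capacity of such uniformly bounded hulls stays bounded as well. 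The hard part is precisely this comparison of clocks: a priori the time change $t\mapsto a_t/2=\int_0^t h'_s(U(s))^2\,ds$ could diverge as $t\uparrow T_D$, which would push $\zeta$ out to $+\infty$ and make the statement false. Controlling $\int_0^{T_D}h'_s(U(s))^2\,ds$, equivalently bounding the BMD half-plane capacity of the terminal hull $F^0_{T_D}$, is where the continuity and boundedness of the $\sle_6$ trace, together with the comparison between the ordinary and the BMD half-plane capacities for hulls in a fixed bounded region of $D$, must be invoked.
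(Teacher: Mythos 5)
Your proposal follows essentially the same route as the paper's proof: identify $\skle_{\sqrt6,-b_{\bmd}}$ on $D$ with $\sle_6$ reparametrized by BMD half-plane capacity (the Girsanov factor being trivial for $\kappa=6$), deduce $T_D<\infty$ almost surely from the space-filling property \cite[Proposition~6.10]{La05}, argue that the capacity-time explosion occurs exactly at $T_D$, and conclude by showing the BMD half-plane capacity of the bounded terminal hull is finite (the paper additionally invokes uniqueness in law to pass from this particular weak solution to the solution in the statement). The step you single out as the ``hard part'' is closed in the paper by a one-line estimate, \cite[Eq.~(A.20)]{CF18}, which directly gives $\hcap^D(F_{T_D})\le 4R^2/\pi$ whenever $F_{T_D}\subset B(0,R)\cap D$, so no comparison between ordinary and BMD half-plane capacities is actually needed.
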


\begin{proof}
Let $\{F_t\}_{t \in [0, \infty)}$ be an $\sle_6$ hull
defined on a filtered probability space
$(\Omega, (\mathcal{F}_t)_{t \geq 0}, \mathbf{Q})$ with the usual conditions.
\cite[Proposition~6.10]{La05} asserts that
$\mathbf{Q}\left(\bigcup_{t \in [0, \infty)}F_t=\uhp\right)=1$,
and hence $T_D:=\inf\{t>0; F_t \nsubseteq D\}<\infty$ $\mathbf{Q}$-a.s.
We define
\[
a_t:=\hcap^D(F_t),\quad \check{T}_D:=\frac{a(T_D-)}{2}\quad \text{and}\quad
\check{F}_t:=F_{a^{-1}(2(t \wedge \check{T}_D)}
\]
$\{\check{F}_t\}_{t<\check{T}_D}$ is then
the $\skle_{\sqrt{6}, -b_{\bmd}}$ hull in $D$
defined on an enlargement of $(\Omega, (\mathcal{F}_t)_{t \geq 0}, \mathbf{Q})$.
The pair of associated driving function $\check{\xi}(t)$
and slit vector $\check{\slit}(t)$
is thus a weak solution of the SDEs \eqref{eq:KLs} and \eqref{eq:SKLE}
whose coefficients are the above $\alpha$ and $\beta$ up to $\check{T}_D$.
The stopping time $\check{T}_D$ is its explosion time,
because if the solution could be continued, then
$F_{T_D}$ were included by $D$, which contradicts to the definition of $T_D$.

Because of the uniqueness in law of solutions
to the SDEs \eqref{eq:KLs} and \eqref{eq:SKLE},
it suffices to show that
$\check{T}_D$ is finite $\mathbf{Q}$-a.s.\ for completing the proof.
We now take a sample $\{F_t\}$ such that $T_D<\infty$.
Since $F_{T_D}$ is then bounded,
there exists $R>0$ such that $F_{T_D} \subset B(0,R) \cap D$.
By \cite[Eq.~(A.20)]{CF18}, we have
\begin{align*}
2\check{T}_D&=\hcap^D(F_{T_D}) \\
	&\leq \frac{2R}{\pi}\int_0^{\pi}\mean{*}{Re^{i\theta}}
	{\Im Z^*_{\sigma^*_{B(0,R) \cap D}}; \sigma^*_{B(0,R) \cap D}<\infty}
	\sin\theta \,d\theta \\
	&\leq \frac{4R^2}{\pi}<\infty.
\end{align*}
As $T_D<\infty$ holds $\mathbf{Q}$-a.s., we reach the desired conclusion.
\end{proof}

We have two remarks on Theorem~\ref{prop:LM_0}.
First, this theorem is highly non-trivial from the form of SDE~\eqref{eq:SKLE}
while the proof is straightforward in terms of $\sle_6$.
Second, it is clear that the proof of Theorem~\ref{prop:LM_0} does not work
for $\kappa \neq 6$.
We have to deal with the quantity $\partial \log Q/\partial \xi_0$
and reveal how much repulsive force is exerted between
the slit vector $\slit(t)$ and driving function $\xi(t)$.
This is yet to be investigated.

\section*{Acknowledgements}
This work was supported by JSPS KAKENHI Grant Number \linebreak JP19J13031.


\begin{thebibliography}{00}

\bibitem{BF04} R.\ O.\ Bauer and R.\ M.\ Friedrich, Stochastic Loewner evolution in multiply connected domains, C.\ R.\ Acad.\ Sci.\ Paris, Ser.\ I {\bf 339} (2004), 579--584.
\bibitem{BF06} R.\ O.\ Bauer and R.\ M.\ Friedrich, On radial stochastic Loewner evolution in multiply connected domains, J.\ Funct.\ Anal.\ {\bf 237} (2006), 565--588.
\bibitem{BF08} R.\ O.\ Bauer and R.\ M.\ Friedrich, On chordal and bilateral SLE in multiply connected domains, Math.\ Z.\ {\bf 258} (2008), 241--265.
\bibitem{CF18} Z.-Q.\ Chen and M.\ Fukushima, Stochastic Komatu--Loewner evolutions and BMD domain constant, Stochastic Process.\ Appl.\ {\bf 128} (2018), 545--594.
\bibitem{CFR16} Z.-Q.\ Chen, M.\ Fukushima and S.\ Rohde, Chordal Komatu--Loewner equation and Brownian motion with darning in multiply connected domains, Trans.\ Amer.\ Math.\ Soc.\ {\bf 368} (2016), 4065--4114.
\bibitem{CFS17} Z.-Q.\ Chen, M.\ Fukushima and H.\ Suzuki, Stochastic Komatu--Loewner evolutions and SLEs, Stochastic Process.\ Appl.\ {\bf 127} (2017), 2068--2087.
\bibitem{CW05} K.\ L.\ Chung and J.\ B.\ Walsh, {\it Markov processes, Brownian motion, and time symmetry}, 2nd ed., Grundlehren der mathematischen Wissenschaften, vol.\ 249, Springer, 2005.
\bibitem{Do84} J.\ L.\ Doob, {\it Classical Potential Theory and Its Probabilistic Counterpart}, Grundlehren der mathematischen Wissenschaften, vol.\ 262, Springer-Verlag, London, 1984.
\bibitem{La05} G.\ F.\ Lawler, {\it Conformally Invariant Processes in the Plane}, Mathematical Surveys and Monographs, vol.\ 114, American Mathematical Society, Providence, RI, 2005.
\bibitem{La06} G.\ F.\ Lawler, The Laplacian-$b$ random walk and the Schramm--Loewner evolution, Illinois J.\ Math.\ {\bf 50} (2006), 701--746.
\bibitem{Mu18} T.\ Murayama, Chordal Komatu--Loewner equation for a family of continuously growing hulls, Stochastic Process.\ Appl.\ {\bf 129} (2019), 2968--2990.
\bibitem{Mu19+a} T.\ Muramaya, On the slit motion obeying chordal Komatu--Loewner equation with finite explosion time, J.\ Evol.\ Equ.\ (2019), https://doi.org/10.1007/s00028-019-00519-3
\bibitem{RY99} D.\ Revuz and M.\ Yor, {\it Continuous Martingales and Brownian motion}, 3rd ed., Grundlehren der mathematischen Wissenschaften, vol.\ 293, Springer-Verlag, Berlin Heidelberg, 1999.
\bibitem{Sc00} O.\ Schramm, Scaling limits of loop-erased random walks and uniform spanning trees, Israel J.\ Math.\ {\bf 118} (2000), 221--288.
\bibitem{Zh04} D.\ Zhan, Random Loewner chains in Riemann surfaces, Ph.\ D.\ thesis, California Institute of Technology, 2004.

\end{thebibliography}
\end{document}